\newtheorem{thm}{Theorem}[section]
\newtheorem{cor}[thm]{Corollary}
\DeclareMathOperator{\lcm}{lcm}
\tikzset{
    >=stealth',
    punkt/.style={
           rectangle,
           rounded corners,
           draw=black, very thick,
           text width=6.5em,
           minimum height=2em,
           text centered},
    pil/.style={
           ->,
           thick,
           shorten <=2pt,
           shorten >=2pt,}
}
\def\lf{\left\lfloor}   
\def\rf{\right\rfloor}
\begin{document}

\oddsidemargin 16.5mm
\evensidemargin 16.5mm
\thispagestyle{plain}

\vspace{1cc}

\baselineskip=17pt

\title{Partial Franel sums}

\author{R.~Tom\'as\\
  CERN\\
  CH 1211 Geneva 23, Switzerland\\
 E-mail: rogelio.tomas@cern.ch}

\date{\today}

\maketitle

\abstract{
 Analytical expressions are derived for the position of irreducible fractions
  in the Farey sequence $F_N$ of order $N$ for a particular choice
  of $N$. The asymptotic behaviour is derived obtaining a lower
  error bound than in previous results when these fractions are in
  the vicinity of $0/1$, $1/2$ or $1/1$.
  
  Franel's famous formulation of Riemann's hypothesis uses
  the summation of distances between irreducible fractions and evenly
  spaced points in $[0,1]$. A partial Franel sum is defined here
  as a summation of these distances over a subset of fractions in $F_N$.
  The partial Franel sum in the range $[0, i/N]$, with $N={\rm lcm}(1,2,...,i)$
  is shown  here to grow as $O(\log(N)\delta_B(\log N))$, where $\delta_B(x)$ is a decreasing function.
  Other partial Franel sums are also explored.
}

\section{Introduction and main results}
The Farey sequence $F_N$ of order $N$  is an ascending sequence of irreducible 
fractions between 0 and 1 whose denominators do not exceed $N$~\cite{hardy}.
Riemann's hypothesis implies that the irreducible fractions  tend to be regularly distributed in $[0,1]$. A formulation
of this statement follows~\cite{Franel},
\begin{equation}
\sum_{n=1}^{|F_N|}\left|F_{N}(n)-\frac{n}{|F_N|}\right|= O\left(N^{\frac{1}{2}+\epsilon}\right)\ ,
\label{Riemann} \nonumber
\end{equation}
where $F_N(n)$ is the $n^{\rm th}$ irreducible fraction in $F_N$.
Here we define the partial Franel sum in the range $[a_1/b_1, a_2/b_2]$ as
\begin{equation}
P\left(\frac{a_1}{b_1},\frac{a_2}{b_2}\right)=\sum_{n=I_N(a_1/b_1)}^{I_N(a_2/b_2)}\left|F_{N}(n)-\frac{n}{|F_N|}\right| \ ,
\label{partial_franel} \nonumber
\end{equation}
where  $\displaystyle I_N\left(a/b\right)$ is the position that $a/b$ occupies in $F_N$.
In~\cite{dress} the upper bound of the distance 
$\left|F_{N}(n)-{n}/{|F_N|}\right|$
 is established to be $1/N$
and to be located at $F_N(2)=1/N$. This motivates the study of partial
Franel sums in  ranges including $1/N$.
Furthermore, another equivalent formulation of the Riemann's hypothesis involving
sums over irreducible fractions in the range $[0,1/4]$ follows~\cite{kanemitsu},
\begin{equation}
\sum_{n=1}^{I_N(1/4)}\left(F_{N}(n)-\frac{I_N(1/4)}{2|F_N|}\right)= O\left(N^{\frac{1}{2}+\epsilon}\right)\ , \nonumber
\end{equation}
showing again the relevance of the vicinity of $1/N$.

In~\cite{motif}, Chapter 6, it is attempted to find a closed expression for
the $i^{th}$ fraction in $F_N$ ending in an ``analytical hole''. This paper achieves this goal for fractions in the range $[0, i/N]$, with $N={\rm lcm}(1,2,...,i)$ as explained in the following. Note that  $N=\lcm(1,2,...,i)=e^{\psi(i)}$, 
where $\psi(i)$ is the second Chebyshev function that fulfills the property $\psi(i) = (1+o(1))i$, and hence
$ i =(1 + o(1)) \log N $.

Let the subsequence $F_{N}^{a_1/b_1,\, a_2/b_2}$  of $F_N$, contain all the fractions of $F_N$ in $[a_1/b_1,\ a_2/b_2]$.
The cardinality of $F_{N}^{a_1/b_1,\, a_2/b_2}$ is well known to be~\cite{cobeli}
\begin{equation}
\left|F_N^{a_1/b_1,\, a_2/b_2}\right|=\frac{3}{\pi^2}\left(\frac{a_2}{b_2}-\frac{a_1}{b_1}\right)N^2 + O(N \log N)\ . \nonumber
\end{equation}

As $\displaystyle I_N\left(a_2/b_2\right)$ is the position that $a_2/b_2$ occupies in  $F_N$, it follows that
\begin{equation}
I_N\left(\frac{a_2}{b_2}\right) = \left|F_{N}^{0/1,\, a_2/b_2}\right|=\frac{3}{\pi^2}\frac{c}{d}N^2 + O(N \log N)\ . \label{wellknown} 
\end{equation}
A first result of this paper is the derivation of an analytical expression
for  $I_N\left(1/q\right)$ where $N={\rm lcm(1,2,...,i)}$ and $N/i \leq q \leq N$
as
\begin{equation}
I_N\left(\frac{1}{q} \right) = 2+ N\sum_{j=1}^{i}\frac{\varphi(j)}{j} - q\Phi(i)\ ,
\nonumber
\end{equation}
where $\varphi(i)$ is the Totient function and $\Phi(i)$ is the summatory Totient function.
To reach this relation a series of bijections
between $F_{i'}$, with $i'\leq i$, and subsequences 
of $F_N$ are established covering all elements in  $F_N^{0/1,\, 1/q}$. 
Thanks to these bijections the cardinality of $F_N^{0/1,\, 1/q}$ can be expressed as function 
of all $|F_{i'}|$.
These bijections are illustrated in
Table~\ref{tab1} for $N=\lcm(1,2,...,5)=60$.
This result is used to derive the equivalent asymptotic estimate of~(\ref{wellknown}) with a smaller residual error:
\begin{eqnarray}
  I_N\left(\frac{1}{q}\right) &=& 
  \frac{3}{\pi^2}q\left(\frac{N^2}{q^2} -\left\{ \frac{N}{q}  \right\}^2   \right) +  O\left(N\delta_A\left(\lf\frac{N}{q}\rf\right)\right)\ ,
\nonumber
\end{eqnarray}
where $\left\{x\right\}=x - \lf x\rf$ and $\delta_A(x)$ is a decreasing function defined as
\begin{eqnarray}
\delta_A(x)&=&\exp\left(-A\frac{\log^{0.6}x}{(\log\log x)^{0.2}}\right)\ ,\label{delta}
\end{eqnarray}
where $A>0$. 

Using this result the partial Franel sum in the range $[0,$ $1/(N/i)]$ is shown to be

\begin{equation}
P\left(\frac{0}{1},\frac{1}{N/i}\right)=O(\log (N)\delta_B(\log N))   \ , \nonumber
\end{equation}
with $0<B<A$ and again $N=\lcm(1,2,...,i)$.
This partial Franel sum, therefore, grows strictly slower than $O(\log N)$.
If we would assume the Riemann hypothesis and a uniform distribution density of Farey elements in $[0,1]$
 we would expect this partial Franel sum to actually decrease as $O(\log (N)/N^{1/2-\epsilon})$. An equivalent result is obtained for partial Franel sums in ranges including $1/2$. The generalization to compute partial Franel sums in the vicinity of any irreducible fraction is explored.
Earlier results of this work were applied to resonance diagrams~\cite{tomas,arxiv}.

\begin{table}
\renewcommand{\arraystretch}{1.15}\addtolength{\tabcolsep}{1.2pt}
\caption{Correspondance between elements in $F_{i'}$,
with $0<i'<5$ and first 90 elements in $F_{N}$, given by
$u/l=h/(hq-k)$ with $N/(i'+1)<q\leq N/i'$ and $N=\lcm(1,2,3,4,5)=60$. Note that the images  of elements $1/1$ and $0/1$ of adjacent maps are equal and only the $1/1$ case is shown on the table. The illustrated maps originate from the map $M$ in Theorem~\ref{maptheo} with $a_1/b_1=0/1$ and $a_2/b_2=1/0$.}\label{tab1}
\centering
\resizebox{1.1\textwidth}{!}{%
\begin{tabular}{|ccccc||ccccc||ccccc|}\hline
  $i'$ & $q$ & $h/k $     & $u/l$     & $I_N(\frac{u}{l})$  &  $i'$ & $q$       & $h/k $    & $u/l$     & $I_N(\frac{u}{l})$  &  $i'$ & $q$       & $h/k $    & $u/l$     & $I_N(\frac{u}{l})$  \\
      &     & $\in F_{i'}$  & $\in F_N$  &            &      &           &$\in F_{i'}$  & $\in F_N$ &      &      &           &$\in F_{i'}$  & $\in F_N$ &            \\  \hline
- & -  &  -             &$\frac{0}{1}$          & 1  & 1 & 32 & $\frac{ 1 }{ 1 }$ & $\frac{ 1 }{ 31 }$ & 31 &  3 & 18 & $\frac{ 1 }{ 3 }$ & $\frac{ 3 }{ 53 }$ & 61 \\\hline
1 & 60 & $\frac{ 0 }{1}$&$\frac{\bm1}{\bm6\bm0}$& 2  & 1 & 31 & $\frac{1}{1}$ &$\frac{\bm1}{\bm3\bm0}$ & 32 &  3 & 18 & $\frac{ 1 }{ 2 }$ & $\frac{ 2 }{ 35 }$ & 62 \\\hline
1 & 60 & $\frac{ 1 }{ 1 }$ & $\frac{ 1 }{ 59 }$ & 3  & 2 & 30 & $\frac{ 1 }{ 2 }$ & $\frac{ 2 }{ 59 }$ & 33 &  3 & 18 & $\frac{ 2 }{ 3 }$ & $\frac{ 3 }{ 52 }$ & 63 \\\hline
1 & 59 & $\frac{ 1 }{ 1 }$ & $\frac{ 1 }{ 58 }$ & 4  & 2 & 30 & $\frac{ 1 }{ 1 }$ & $\frac{ 1 }{ 29 }$ & 34 &  3 & 18 & $\frac{ 1 }{ 1 }$ & $\frac{ 1 }{ 17 }$ & 64 \\\hline
1 & 58 & $\frac{ 1 }{ 1 }$ & $\frac{ 1 }{ 57 }$ & 5  & 2 & 29 & $\frac{ 1 }{ 2 }$ & $\frac{ 2 }{ 57 }$ & 35 &  3 & 17 & $\frac{ 1 }{ 3 }$ & $\frac{ 3 }{ 50 }$ & 65 \\\hline
1 & 57 & $\frac{ 1 }{ 1 }$ & $\frac{ 1 }{ 56 }$ & 6  & 2 & 29 & $\frac{ 1 }{ 1 }$ & $\frac{ 1 }{ 28 }$ & 36 &  3 & 17 & $\frac{ 1 }{ 2 }$ & $\frac{ 2 }{ 33 }$ & 66 \\\hline
1 & 56 & $\frac{ 1 }{ 1 }$ & $\frac{ 1 }{ 55 }$ & 7  & 2 & 28 & $\frac{ 1 }{ 2 }$ & $\frac{ 2 }{ 55 }$ & 37 &  3 & 17 & $\frac{ 2 }{ 3 }$ & $\frac{ 3 }{ 49 }$ & 67 \\\hline
1 & 55 & $\frac{ 1 }{ 1 }$ & $\frac{ 1 }{ 54 }$ & 8  & 2 & 28 & $\frac{ 1 }{ 1 }$ & $\frac{ 1 }{ 27 }$ & 38 &  3 & 17 & $\frac{ 1 }{ 1 }$ & $\frac{ 1 }{ 16 }$ & 68 \\\hline
1 & 54 & $\frac{ 1 }{ 1 }$ & $\frac{ 1 }{ 53 }$ & 9  & 2 & 27 & $\frac{ 1 }{ 2 }$ & $\frac{ 2 }{ 53 }$ & 39 &  3 & 16 & $\frac{ 1 }{ 3 }$ & $\frac{ 3 }{ 47 }$ & 69 \\\hline
1 & 53 & $\frac{ 1 }{ 1 }$ & $\frac{ 1 }{ 52 }$ & 10 & 2 & 27 & $\frac{ 1 }{ 1 }$ & $\frac{ 1 }{ 26 }$ & 40 &  3 & 16 & $\frac{ 1 }{ 2 }$ & $\frac{ 2 }{ 31 }$ & 70 \\\hline
1 & 52 & $\frac{ 1 }{ 1 }$ & $\frac{ 1 }{ 51 }$ & 11 & 2 & 26 & $\frac{ 1 }{ 2 }$ & $\frac{ 2 }{ 51 }$ & 41 &  3 & 16 & $\frac{ 2 }{ 3 }$ & $\frac{ 3 }{ 46 }$ & 71 \\\hline
1 & 51 & $\frac{ 1 }{ 1 }$ & $\frac{ 1 }{ 50 }$ & 12 & 2 & 26 & $\frac{ 1 }{ 1 }$ & $\frac{ 1 }{ 25 }$ & 42 &  3 & 16 &$\frac{1}{1}$ & $\frac{\bm1}{\bm1\bm5}$ & 72 \\\hline
1 & 50 & $\frac{ 1 }{ 1 }$ & $\frac{ 1 }{ 49 }$ & 13 & 2 & 25 & $\frac{ 1 }{ 2 }$ & $\frac{ 2 }{ 49 }$ & 43 &  4 & 15 & $\frac{ 1 }{ 4 }$ & $\frac{ 4 }{ 59 }$ & 73 \\\hline
1 & 49 & $\frac{ 1 }{ 1 }$ & $\frac{ 1 }{ 48 }$ & 14 & 2 & 25 & $\frac{ 1 }{ 1 }$ & $\frac{ 1 }{ 24 }$ & 44 &  4 & 15 & $\frac{ 1 }{ 3 }$ & $\frac{ 3 }{ 44 }$ & 74 \\\hline
1 & 48 & $\frac{ 1 }{ 1 }$ & $\frac{ 1 }{ 47 }$ & 15 & 2 & 24 & $\frac{ 1 }{ 2 }$ & $\frac{ 2 }{ 47 }$ & 45 &  4 & 15 & $\frac{ 1 }{ 2 }$ & $\frac{ 2 }{ 29 }$ & 75 \\\hline
1 & 47 & $\frac{ 1 }{ 1 }$ & $\frac{ 1 }{ 46 }$ & 16 &  2 & 24 & $\frac{ 1 }{ 1 }$ & $\frac{ 1 }{ 23 }$ & 46&  4 & 15 & $\frac{ 2 }{ 3 }$ & $\frac{ 3 }{ 43 }$ & 76 \\\hline
1 & 46 & $\frac{ 1 }{ 1 }$ & $\frac{ 1 }{ 45 }$ & 17 &  2 & 23 & $\frac{ 1 }{ 2 }$ & $\frac{ 2 }{ 45 }$ & 47&  4 & 15 & $\frac{ 3 }{ 4 }$ & $\frac{ 4 }{ 57 }$ & 77 \\\hline
1 & 45 & $\frac{ 1 }{ 1 }$ & $\frac{ 1 }{ 44 }$ & 18 &  2 & 23 & $\frac{ 1 }{ 1 }$ & $\frac{ 1 }{ 22 }$ & 48&  4 & 15 & $\frac{ 1 }{ 1 }$&$\frac{1}{14}$& 78 \\\hline
1 & 44 & $\frac{ 1 }{ 1 }$ & $\frac{ 1 }{ 43 }$ & 19 &  2 & 22 & $\frac{ 1 }{ 2 }$ & $\frac{ 2 }{ 43 }$ & 49&  4 & 14 & $\frac{ 1 }{ 4 }$ & $\frac{ 4 }{ 55 }$ & 79 \\\hline
1 & 43 & $\frac{ 1 }{ 1 }$ & $\frac{ 1 }{ 42 }$ & 20 &  2 & 22 & $\frac{ 1 }{ 1 }$ & $\frac{ 1 }{ 21 }$ & 50&  4 & 14 & $\frac{ 1 }{ 3 }$ & $\frac{ 3 }{ 41 }$ & 80 \\\hline
1 & 42 & $\frac{ 1 }{ 1 }$ & $\frac{ 1 }{ 41 }$ & 21 &  2 & 21 & $\frac{ 1 }{ 2 }$ & $\frac{ 2 }{ 41 }$ & 51&  4 & 14 & $\frac{ 1 }{ 2 }$ & $\frac{ 2 }{ 27 }$ & 81 \\\hline
1 & 41 & $\frac{ 1 }{ 1 }$ & $\frac{ 1 }{ 40 }$ & 22 &  2 & 21 & $\frac{ 1}{1}$ &$\frac{\bm1}{\bm2\bm0}$& 52&  4 & 14 & $\frac{ 2 }{ 3 }$ & $\frac{ 3 }{ 40 }$ & 82 \\\hline
1 & 40 & $\frac{ 1 }{ 1 }$ & $\frac{ 1 }{ 39 }$ & 23 &  3 & 20 & $\frac{ 1 }{ 3 }$ & $\frac{ 3 }{ 59 }$ & 53&  4 & 14 & $\frac{ 3 }{ 4 }$ & $\frac{ 4 }{ 53 }$ & 83 \\\hline
1 & 39 & $\frac{ 1 }{ 1 }$ & $\frac{ 1 }{ 38 }$ & 24 &  3 & 20 & $\frac{ 1 }{ 2 }$ & $\frac{ 2 }{ 39 }$ & 54&  4 & 14 & $\frac{ 1 }{ 1 }$ & $\frac{ 1 }{ 13 }$ & 84 \\\hline
1 & 38 & $\frac{ 1 }{ 1 }$ & $\frac{ 1 }{ 37 }$ & 25 &  3 & 20 & $\frac{ 2 }{ 3 }$ & $\frac{ 3 }{ 58 }$ & 55&  4 & 13 & $\frac{ 1 }{ 4 }$ & $\frac{ 4 }{ 51 }$ & 85 \\\hline
1 & 37 & $\frac{ 1 }{ 1 }$ & $\frac{ 1 }{ 36 }$ & 26 &  3 & 20 & $\frac{ 1 }{ 1 }$ & $\frac{ 1 }{ 19 }$ & 56&  4 & 13 & $\frac{ 1 }{ 3 }$ & $\frac{ 3 }{ 38 }$ & 86 \\\hline
1 & 36 & $\frac{ 1 }{ 1 }$ & $\frac{ 1 }{ 35 }$ & 27 &  3 & 19 & $\frac{ 1 }{ 3 }$ & $\frac{ 3 }{ 56 }$ & 57&  4 & 13 & $\frac{ 1 }{ 2 }$ & $\frac{ 2 }{ 25 }$ & 87 \\\hline
1 & 35 & $\frac{ 1 }{ 1 }$ & $\frac{ 1 }{ 34 }$ & 28 &  3 & 19 & $\frac{ 1 }{ 2 }$ & $\frac{ 2 }{ 37 }$ & 58&  4 & 13 & $\frac{ 2 }{ 3 }$ & $\frac{ 3 }{ 37 }$ & 88 \\\hline
1 & 34 & $\frac{ 1 }{ 1 }$ & $\frac{ 1 }{ 33 }$ & 29 &  3 & 19 & $\frac{ 2 }{ 3 }$ & $\frac{ 3 }{ 55 }$ & 59&  4 & 13 & $\frac{ 3 }{ 4 }$ & $\frac{ 4 }{ 49 }$ & 89 \\\hline
1 & 33 & $\frac{ 1 }{ 1 }$ & $\frac{ 1 }{ 32 }$ & 30 &  3 & 19 & $\frac{ 1 }{ 1 }$ & $\frac{ 1 }{ 18 }$ & 60&  4 & 13 & $\frac{ 1}{1}$&$\frac{\bm1}{\bm1\bm2}$ & 90 \\\hline
\end{tabular} 
}
\end{table}

\section{Definitions}

We say that two elements of a Farey sequence, $a_1/b_1$ and $a_2/b_2$, form a Farey pair if $|a_1b_2-a_2b_1|=1$. 
In this report we exceptionally allow  $0/1$ and $1/0$
to form a Farey pair even if $1/0$ is not a proper fraction.
The mediant of a Farey pair, $a_1/b_1$ and $a_2/b_2$, is given by
\begin{equation}
    \frac{a_1 + a_2}{b_1+b_2}\nonumber
\end{equation}
which is an irreducible fraction existing between $a_1/b_1$ and $a_2/b_2$ and forms two Farey pairs
with $a_1/b_1$ and $a_2/b_2$.

\section{Results}\label{Res}



\begin{thm}\label{maptheo}
Let $a_1/b_1$ and $a_2/b_2$ be a Farey pair with $b_1>b_2$.
Let $N$ be multiple of $b_1 i(i+1)$ with $i$ being a natural number such $0<i<N$.
Let $q$ be an integer 
fulfilling  
\begin{equation}\nonumber
\displaystyle \frac{N}{b_1(i+1)} < q \leq \frac{N}{b_1 i}
\ \ \ \
and \ \ \ \ b_1q+b_2 \leq N\ .
\end{equation}
Let $F'_i$ be defined as
\begin{equation}
F'_i = \left\{ \frac{h}{k}: \frac{h}{k}\in F_i\, , \,\, k(b_1 q+b_2)-b_1 h \leq N  \right\}\ . \nonumber
\end{equation}
There is a bijective map $M$ between $F'_i$ and $\displaystyle F_{N}^{ \frac{a_1 q+a_2}{b_1 q + b_2} ,\, \frac{a_1 (q-1)+a_2}{b_1 (q-1)+b_2}}$,  given by
\begin{equation}
M:\ F'_i \rightarrow F_{N}^{ \frac{a_1 q+a_2}{b_1 q + b_2} ,\, \frac{a_1 (q-1)+a_2}{b_1 (q-1)+b_2}}\ , \ \ \ \ \ \ \frac{h}{k} \mapsto \frac{k(a_1 q+a_2)-a_1 h}{k(b_1 q+b_2)-b_1 h }\label{map}\ . \nonumber
\end{equation}
\begin{equation}
M^{-1}:\ F_{N}^{ \frac{a_1 q+a_2}{b_1 q + b_2} ,\, \frac{a_1 (q-1)+a_2}{b_1 (q-1)+b_2}} \rightarrow F'_i\ , \ \ \ \ \ \ \frac{u}{l} \mapsto  \frac{q(b_1 u- la_1)+b_2u-la_2}{b_1 u- la_1}\label{mapinv}\ . \nonumber
\end{equation}
The bijective map is order-preserving when $a_2/b_2 > a_1/b_1$
and order-inverting when $a_2/b_2 < a_1/b_1$.
\end{thm}\noindent

\begin{proof}
 We first demonstrate that $M$ is injective.  
  $\frac{a_1 q+a_2}{b_1 q + b_2}$ and  $\frac{a_1 (q-1)+a_2}{b_1 (q-1)+b_2}$
  form a Farey pair since  $a_1/b_1$ and $a_2/b_2$ form a Farey pair:
  \[|(a_1 q +a_2)(b_1(q-1)+b_2) - (b_1 q+b_2)(a_1(q-1)+a_2)| = |b_2 a_1 -a_2 b_1| =1 \ .\]
Let $u/l$ be the image of $h/k$ under $M$,
\begin{equation}
\frac{u}{l} = \frac{k(a_1 q+a_2)-a_1 h}{k(b_1 q+b_2)-b_1 h }    \ .\nonumber
\end{equation}
By virtue of this expressison $u/l$ is  obtained by applying the mediant operation successively between $\frac{a_1 q+a_2}{b_1 q + b_2}$ and  $\frac{a_1 (q-1)+a_2}{b_1 (q-1)+b_2}$ 
in the same fashion as  $h/k$ is obtained by applying the mediant between $0/1$ and $1/1$, meaning
\begin{eqnarray}
 \frac{h}{k}&=&\frac{(k-h)\cdot0 + h \cdot 1}{(k-h)\cdot 1 + h\cdot 1 }    \ , \nonumber\\
 \frac{u}{l} &=& \frac{(k-h)\cdot(a_1 q+a_2) + h\cdot(a_1 (q-1)+a_2)}{(k-h)\cdot(b_1 q + b_2)+ h\cdot(b_1 (q-1)+b_2)} \ . \nonumber
\end{eqnarray}
Therefore $u/l$ is a Farey fraction in the interval of interest:
\begin{equation}
\left[ \frac{a_1 q+a_2}{b_1 q + b_2} ,\, \frac{a_1 (q-1)+a_2}{b_1 (q-1)+b_2}\right]\ . \nonumber
\end{equation}
$u/l$ belongs to $F_N$  by definition of the domain $F'_i$, meaning that $h/k$ belongs to $F'_i$ if $l\leq N$.

Now we demonstrate that $M^{-1}$ is also injective. Let $u/l$ belong to 
$F_{N}^{ \frac{a_1 q+a_2}{b_1 q + b_2} ,\, \frac{a_1 (q-1)+a_2}{b_1 (q-1)+b_2}}$
and assume $a_2/b_2>a_1/b_1$, so that
\begin{equation}
\frac{a_1 q+a_2}{b_1 q + b_2} \leq 
\frac{u}{l}   \leq \frac{a_1 (q-1)+a_2}{b_1 (q-1)+b_2}\  \label{comparul} \ .
    \end{equation}
Let $h/k$ be the image of $u/l$ under $M^{-1}$,
\begin{equation}
   \frac{h}{k} = \frac{q(b_1 u- la_1)+ub_2-la_2}{b_1 u- la_1}\ . \label{invmap}
\end{equation}
This equality implies gcd$(h,k)$= gcd$(ub_2-la_2,b_1 u - la_1)$. 
Since 
\begin{equation}
\gcd(u,l)=\gcd(a_1,b_1)=\gcd(a_2,b_2)=1 \nonumber
\end{equation}
and $a_2b_1-a_1b_2=1$
then gcd$(h,k)=1$ according to the property in~\cite{nntdm} and, hence, $h/k$ is an irreducible fraction. Furthermore, operating with the inequalities in~(\ref{comparul}):
\begin{equation}
q(b_1u-la_1) \geq - (ub_2 -la_2 ) \geq (b_1u -l a_1)(q-1) \nonumber
\end{equation}
and therefore $0\leq h \leq k$.

From relations~(\ref{comparul}) and~(\ref{invmap})
\begin{equation}
  k= b_1 u -la_1 \leq b_1 l \frac{a_1 (q-1)+a_2}{b_1 (q-1)+b_2} - l a_1=  \frac{l}{b_1 (q-1)+b_2}\nonumber
\end{equation}
and using that $l\leq N$ and $q > \frac{N}{b_1(i+1)}$, hence $b_1(q-1) \geq \frac{N}{i+1}$,
\begin{equation}
k \leq \frac{N}{\frac{N}{i+1}+b_2} = \frac{i+1}{1 +\frac{i+1}{N}b_2} < i+1 \nonumber \ .
  \end{equation}
If $b_2>0$ this implies $k \leq i$ and gathering 
the above results $0\leq h \leq k \leq i$ and gcd$(h,k)$=1, hence $h/k \in F_i$. To demonstrate that $h/k$ belongs to $F'_i$ it is easy to verify that $k(b_1 q +b_2) -b_1h\leq N$.

If $b_2=0$ we are in the exceptional case included 
in this report of $a_1/b_1=0/1$ and $a_2/b_2=1/0$, that implies $h/k=(qu-l)/u$, note that $k=u$.
We only need to show that $k\leq i$ also in this case.
From the  inequalities in~(\ref{comparul}) and $\frac{N}{i} \geq q > \frac{N}{i+1}$,
\begin{equation}
\frac{i}{N} \leq \frac{1}{q}  \leq  \frac{u}{l} \leq \frac{1}{q-1} \leq 
\frac{i+1}{N} \ .
    \nonumber
\end{equation}
 $(i+1)/N$ is not an irreducible fraction,
 as $N$ is taken as a multiple of $i(i+1)$, and therefore it does not belong to $F_N$. Similarly for $i/N$ when $i>1$. In the range $[i/N, (i+1)/N]$ there cannot be  fractions with denominator $N$ other than $1/N$ when $i=1$. Therefore if $i=1$ we directly have $k=u\leq i$ and for $i>1$ we have that $l\leq N-1$ and hence 
 \begin{equation}
    k=u\leq l\frac{i+1}{N} \leq i \nonumber \ .
 \end{equation}
\end{proof}

\begin{cor}\label{cardinals}
  The cardinalities of $F_i$, $F'_i$ and $\displaystyle F_{N}^{ \frac{a_1 q+a_2}{b_1 q + b_2} ,\, \frac{a_1 (q-1)+a_2}{b_1 (q-1)+b_2}}$
  are related as follows:
  \begin{itemize}
  \item If $q=N/(b_1 i)$ then
    \begin{equation}
  |F_i| \geq |F'_i| = \left| F_{N}^{ \frac{a_1 q+a_2}{b_1 q + b_2} ,\, \frac{a_1 (q-1)+a_2}{b_1 (q-1)+b_2}}\right| > |F_i| - i \nonumber
\end{equation}
  \item If $q < N/(b_1 i)$ or $b_2=0$ then
    \begin{equation}
     |F_i| = |F'_i| = \left| F_{N}^{ \frac{a_1 q+a_2}{b_1 q + b_2} ,\, \frac{a_1 (q-1)+a_2}{b_1 (q-1)+b_2}}\right| \nonumber 
\end{equation}
  \end{itemize}
\end{cor}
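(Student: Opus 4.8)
The plan is to extract the middle equality directly from Theorem~\ref{maptheo} and to compare $F'_i$ with $F_i$ using only the single membership constraint that defines $F'_i$. Theorem~\ref{maptheo} exhibits a bijection between $F'_i$ and $F_{N}^{\frac{\chi q+a}{\eta q+b},\,\frac{\chi(q-1)+a}{\eta(q-1)+b}}$, so the equality $|F'_i|=\bigl|F_{N}^{\frac{\chi q+a}{\eta q+b},\,\frac{\chi(q-1)+a}{\eta(q-1)+b}}\bigr|$ holds in both cases with nothing further to prove. Since $F'_i\subseteq F_i$ by definition, the inequality $|F_i|\ge|F'_i|$ is also immediate. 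Everything therefore reduces to deciding, for each $h/k\in F_i$, whether its image denominator $l=k(\eta q+b)-\eta h$ exceeds $N$: the fractions of $F_i$ that are \emph{not} in $F'_i$ are exactly those with $l>N$.

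First I would localise the possible violations to the single denominator $k=i$. Writing $k=i-m$ with $m\ge 0$, the inequality $l>N$ rearranges to $ib-\eta h>m(\eta q+b)+(N-\eta i q)$. Here $N-\eta i q\ge 0$ because $q\le N/(\eta i)$, and the left-hand side satisfies $ib-\eta h\le ib\le i\eta$ by the Farey-neighbour bound $b\le\eta$. If $m\ge 1$ then the right-hand side is at least $m(\eta q+b)\ge\eta q>\eta\cdot\frac{N}{\eta(i+1)}=\frac{N}{i+1}\ge i\eta$, the last inequality using $N\ge\eta i(i+1)$; this strictly exceeds the left-hand side, so no violation is possible. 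Hence only fractions with $k=i$ can fail the constraint, and for those $l>N$ reduces to $ib-\eta h>N-\eta i q$.

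With the violations confined to $k=i$, both bullets follow. When $b=0$ the quantity $ib-\eta h=-\eta h$ is nonpositive while $N-\eta i q\ge0$, so nothing fails; when $q<N/(\eta i)$ one has $\eta i q\le N-\eta i$ (since $q$ is an integer and $N/(\eta i)=(i+1)\,N/(\eta i(i+1))$ is an integer multiple of $i+1$), whence $N-\eta i q\ge i\eta\ge ib\ge ib-\eta h$ and again nothing fails. In either case $F'_i=F_i$, giving the second bullet. In the remaining case $q=N/(\eta i)$ one has $N-\eta i q=0$, so the failing fractions are precisely the $h/i\in F_i$ with $h<ib/\eta$; since $b\le\eta$ these satisfy $1\le h\le i-1$ with $\gcd(h,i)=1$, so there are at most $\varphi(i)<i$ of them for $i\ge 2$. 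This yields $|F'_i|\ge|F_i|-\varphi(i)>|F_i|-i$, the first bullet.

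The step I expect to be the main obstacle is the localisation in the second paragraph: pinning down that no fraction with $k<i$ can violate the constraint requires combining the two quantitative hypotheses — the divisibility condition $N\ge\eta i(i+1)$ (through the lower bound $q>N/(\eta(i+1))$) and the Farey-neighbour inequality $b\le\eta$ — to make $\eta q$ dominate $ib$. Once that domination is in place the remaining case analysis is routine arithmetic, and the totient count is what makes the final inequality strict.
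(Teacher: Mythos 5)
Your proof is correct and follows essentially the same route as the paper's: the equality of cardinalities is read off from the bijection of Theorem~\ref{maptheo}, the failures of the constraint $k(\eta q+b)-\eta h\le N$ are localised to denominator $k=i$ by combining $b\le\eta$ with $q>N/(\eta(i+1))$ and $N$ being a multiple of $\eta i(i+1)$, and the failing fractions at $q=N/(\eta i)$ are counted as fewer than $i$, exactly as in the paper (your uniform rearrangement with $k=i-m$ is only a cosmetic repackaging of the paper's division by $k$). Your explicit restriction to $i\ge 2$ in the totient count flags an edge case the paper silently glosses over (for $i=1$ and $b>0$ the fraction $0/1$ fails the constraint, since $\gcd(0,1)=1$), so your argument is, if anything, slightly more careful than the original.
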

\begin{proof}
  The first inequality is evident from the definition of $F'_i$. The first equality
  derives from the the bijective map
  in Theorem~\ref{maptheo}.

If $q=N/(b_1 i)$, let $u/l$ be the image of $h/k$ via the map $M$ in Theorem~\ref{maptheo},  then $l=k(N/i+b_2)-b_1 h$. 
To prove that $|F_i'|>|F_i|-i$ we should
count how many $h/k \in F_i$  
fulfill $k(N/i+b_2)-b_1 h > N$. Dividing  both sides
of the later inequality by $k$ and operating we obtain
\begin{equation}
b_2-b_1 \frac{h}{k} > \frac{N}{k} - \frac{N}{i} = N\frac{i-k}{ki}\ , \nonumber
\end{equation}
\begin{equation}
b_2 \geq b_2-b_1\frac{h}{k} > N\frac{i-k}{ki} 
\geq 0  \ . \label{esta}
\end{equation}
To fulfill these inequalities it is required that $k=i$, otherwise for any $k<i$
and recalling that $N$ is a multiple of   $b_1 i (i+1)$:
\begin{equation}
N\frac{i-k}{ki} \geq b_1\frac{i+1}{k}(i-k)>b_1\ , \nonumber
\end{equation}
and inequalities in~(\ref{esta}) cannot be fulfilled as $b_2 < b_1$ (from assumption in Theorem~\ref{maptheo}).
Then  $k=i$ implies $h/i<b_2/b_1<1$ and in $F_i$ there are fewer than $i$ irreducible fractions of the form $h/i$ below $b_2/b_1$, hence  $|F_i'|>|F_i|-i$.

If $q<N/(b_1 i)$ we define $g>0$ such that $q=N/(b_1 i)-g$, then $l=k(N/i-gb_1+b_2)-b_1 h $ and we need to count how many $h/k$ in $F_i$ have $l>N$,
\begin{equation}
\ 
b_2-b_1\frac{h}{k}- b_1 g  > N\frac{i-k}{ki}
\ ,\nonumber
\end{equation}
and there are no $h/k$ which can fulfill this equation as $b_2-b_1g < 0$, hence $|F_i|=|F_i'|$ when $q<N/(b_1 i)$.

If $b_2=0$ we should show that there are no $h/k$ in $|F_i|$ fulfilling $kb_1 q-b_1 h > N$.
The largest possible value of $q$ is $N/(b_1 i)$ and therefore
$kb_1 q-b_1 h \leq kN/i-b_1 h < N$, for $i>1$, so there is no $h/k$
fulfilling the previous condition and $|F_i|=|F'_i|$.
Note that $i=1$ and $h/k=0/1$ would not have given $kb_1 q-b_1 h > N$ as $b_1q+b_2 \leq N$ from the assumptions in Theorem~\ref{maptheo}.
\end{proof}
%
%
%
\begin{thm} \label{Iq} Let $N=b_1 {\rm lcm}(1,2,...,i_{max})$, $\displaystyle \frac{N}{b_1(i+1)} < q \leq \frac{N}{b_1 i}$, with   $a_1/b_1$ and $a_2/b_2$ forming a Farey pair, $b_1>b_2$ and   $i < i_{max}$  then:
  \begin{itemize}
\item  For $b_1 > 1$:
\begin{equation}
I_N\left(\frac{a_1 q+a_2}{b_1 q+b_2} \right) = I_N\left(\frac{a_1}{b_1}\right) +s \left(\frac{N}{b_1}\sum_{j=1}^{i}\frac{\varphi(j)}{j} - q\Phi(i)\right) + O(i^2)\ ,
\label{Itheo}
\end{equation}
with $s=+1$ when $a_1/b_1 < a_2/b_2$ and $s=-1$ otherwise. 
\item For $a_1/b_1 = 0/1$ and $a_2/b_2=1/0$:
\begin{equation}
I_N\left(\frac{1}{q} \right) = 2+ N\sum_{j=1}^{i}\frac{\varphi(j)}{j} - q\Phi(i)\ .
\nonumber
\end{equation}
\end{itemize}
 \end{thm}\noindent
\begin{proof}
  To simplify equations we assume $s=+1$ in the following.
  We count the number of elements in $F_N^{\frac{a_1}{b_1},\frac{a_1 q+a_2}{b_1 q+b_2}}$
  using the bijective maps described in Theorem~\ref{maptheo}
  and adding up the cardinalities of the sets involved from Corolary~\ref{cardinals}.
  Thanks to the fact that $N$ is multiple 
  of all natural numbers $i'$ such that $i'\leq i$
  we can establish bijections between $F_i'$ and
  $F_N^{\frac{a_1p+a_2}{b_1p+b_2},\frac{a_1 (p-1)+a_2}{b_1 (p-1)+b_2}}$ where $p$ can take all values fulfilling 
  $\displaystyle \frac{N}{b_1(i'+1)} < p \leq \frac{N}{b_1 i'}$, covering all elements in $F_N^{\frac{a_1}{b_1},\frac{a_1 q+a_2}{b_1 q+b_2}}$
  when scannig over all $i'\leq i$ and the corresponding $p$. For a given $i'$
  the number of values $p$ takes is given by
  \begin{eqnarray}
     \frac{N}{b_1 i'} - \frac{N}{b_1(i'+1)} = \frac{N}{b_1}\left(\frac{1}{ i'} -\frac{1}{i'+1}\right) \nonumber \ .
    \end{eqnarray}
  In a first step we compute the number of elements in   $F_N^{\frac{a_1}{b_1},\frac{a_1 q'+a_2}{b_1 q'+b_2}}$ with $q'=N/(b_1 i)$,
  \begin{eqnarray}
    I_N\left(\frac{a_1 q'+a_2}{b_1 q'+b_2}\right) - I_N\left(\frac{a_1}{b_1}\right)   &=&\frac{N}{b_1} \sum_{i'=1}^{i-1} \left(\frac{1}{ i'} - \frac{1}{i'+1}\right)  (|F'_{i'}|-1) \nonumber\\
    &=& \frac{N}{b_1}\sum_{i'=1}^{i-1} \left[\left(\frac{1}{ i'}-\frac{1}{ i'+1}\right)\Phi(i') + O(i')\right]\nonumber\\
    &=&  \frac{N}{b_1} \sum_{j=1}^{i-1} \frac{\varphi(j)}{j}- \frac{N}{b_1} \frac{\Phi(i-1)}{i} + O(i^2)\ . \nonumber
  \end{eqnarray}
   In particular, when $b_2=0$ the term $O(i^2)$  does not appear according to Corolary~\ref{cardinals}.
   In a second step we compute the number of elements 
   in $F_N^{\frac{a_1 q'+a_2}{b_1 q'+b_2},\ \frac{a_1 q+a_2}{b_1 q+b_2} }$, that is  $\Phi(i)(q'-q)+O(i)$. Adding both contributions gives
\begin{eqnarray}
I_N\left(\frac{a_1 q+a_2}{b_1 q+b_2}\right) - I_N\left(\frac{a_1}{b_1}\right)  &=&    \frac{N}{b_1} \sum_{j=1}^{i-1} \frac{\varphi(j)}{j}- \frac{N}{b_1} \frac{\Phi(i-1)}{i} \nonumber\\ & & \ \ \ \ 
+ \Phi(i)\left(\frac{N}{b_1 i} - q \right)+  O(i^2) \nonumber\\
  &=&   \frac{N}{b_1} \sum_{j=1}^{i} \frac{\varphi(j)}{j} -q\Phi(i)+  O(i^2)\ ,\nonumber
\end{eqnarray}
which demonstrates the theorem for $s=1$.
For $s=-1$ following the same steps leads to the desired result. 
    \end{proof}

\begin{cor} \label{theoIeta}Let $N=b_1 {\rm lcm}(1,2,...,i_{max})$ and $\displaystyle \frac{N}{b_1(i+1)} < q \leq \frac{N}{b_1 i}$, with $i < i_{max}$    then
\begin{eqnarray}
I_N\left(\frac{a_1 q+a_2}{b_1 q+b_2}\right) &=&  I_N\left(\frac{a_1}{b_1}\right)  +s\frac{3}{\pi^2}q\left(\frac{N^2}{b_1^2q^2} -\left\{ \frac{N}{b_1q}  \right\}^2   \right) +  O(N\delta_A(i))\ ,
\nonumber
\end{eqnarray}
with $\delta_A(x)$ defined in~(\ref{delta}).
In particular for $a_1/b_1=0/1$ and $a_2/b_2=1/0$,
\begin{eqnarray}
  I_N\left(\frac{1}{q}\right) &=&  \frac{3}{\pi^2}q\left(\frac{N^2}{q^2} -\left\{ \frac{N}{q}  \right\}^2   \right) +  O(N\delta_A(i)) ,
\nonumber
\end{eqnarray}
and for $a_1/b_1=1/2$ and $a_2/b_2=1/1$,
\begin{eqnarray}
I_N\left(\frac{q+1}{2q+1}\right) &=& \frac{|F_N|}{2}  +\frac{3}{\pi^2}q\left(\frac{N^2}{2^2q^2} -\left\{ \frac{N}{2q}  \right\}^2   \right) +  O(N\delta_A(i)) \ .
\nonumber
\end{eqnarray}
\end{cor}

\begin{proof}
 The following known relations~\cite{kanemitsu2000,walfisz} are needed:
\begin{eqnarray}
\sum_{k=1}^{N}\varphi(k)&=&\frac{3}{\pi^2}N^2 +  E(N) , \label{asym}\\
\sum_{k=1}^{N} \frac{\varphi(k)}{k} &=&\frac{6}{\pi^2}N + H(N)\ ,\nonumber   \\
E(x) &=&  O\left(x\log^{2/3}x(\log\log x)^{4/3} \right)\ ,\nonumber\\
E(x)&=&xH(x)+O(x\delta_A(x))\ , \nonumber
\end{eqnarray}
with $A>0$ and $\delta_A(x)$ is a decreasing factor.
From the definition of $i$, $q$ and $N$ it follows that
\begin{eqnarray}
  i &=& \lf\frac{N}{qb_1}\rf = \frac{N}{qb_1} + O(1)\ ,\nonumber\\
  i &<& i_{max} = (1+o(1))\log N/b_1\ . \nonumber
\end{eqnarray}
Inserting the above equalities in expression~(\ref{Itheo}) of  Theorem~\ref{Iq}, 
\begin{eqnarray}
I_N\left(\frac{a_1 q+a_2}{b_1 q+b_2}\right) &=& I_N\left(\frac{a_1}{b_1}\right) + s \frac{N}{b_1}\frac{6}{\pi^2}i -sq\frac{3}{\pi^2}i^2 + s\frac{N}{b_1}H(i) -sqE(i) + O(i^2)\nonumber \\
  &=& I_N\left(\frac{a_1}{b_1}\right) + s \frac{N}{b_1}\frac{6}{\pi^2}i -sq\frac{3}{\pi^2}i^2 + 
  sq(iH(i)-E(i))
  + O(i^2)\nonumber \\
   &=& I_N\left(\frac{a_1}{b_1}\right) + s \frac{N}{b_1}\frac{6}{\pi^2}i -sq\frac{3}{\pi^2}i^2 + O(N\delta_A(i))\nonumber \\
 & =& I_N\left(\frac{a_1}{b_1}\right)  +s \frac{3}{\pi^2}\frac{N^2}{b_1^2q} - 
 s\frac{3}{\pi^2}q\left\{ \frac{N}{b_1q}  \right\}^2 +
 O(N\delta_A(i)) \ ,\nonumber
\end{eqnarray}
  \end{proof}


\begin{thm}\label{Ihktheo}
  Let $N$ be  $N = b_1 {\rm lcm}(1,2,...,i)$  then the partial Franel sum over all Farey fractions in the range
  $\displaystyle \left[\frac{a_1}{b_1} , \frac{a_1 \frac{N}{b_1 i} +a_2}{b_1 \frac{N}{b_1 i} +b_2} \right]$ gives:
  \begin{itemize}
\item For  $a_1/b_1=0/1$, $a_2/b_2=1/0$ and for $a_1/b_1=1/2$, $a_2/b_2=0/1$ :
\begin{equation}
P\left(\frac{0}{1},\frac{1}{N/i}\right)=\sum_{j=1}^{I_N\left( \frac{1}{N/i}\right)}\left|F_N(j)-\frac{j}{|F_N|}\right|=O(\log (N)\delta_B(\log N))   \ , \nonumber
\end{equation}
\begin{equation}
P\left(\frac{1}{2}, \frac{N/(2i)}{N/i+1} \right)=\sum_{j=I_N\left(\frac{1}{2}  \right)}^{I_N\left( \frac{N/(2i)}{N/i+1}\right)}\left|F_N(j)-\frac{j}{|F_N|}\right|=O(\log (N)\delta_B(\log N))   \ , \nonumber
\end{equation}
with $0<B<A$. The same result holds for $a_1/b_1=1/2$, $a_2/b_2=1/1$.

\item For $ b_1 > 2$ and $b_2 < b_1$:
\begin{equation}
  \sum_{j=I_N\left(\frac{a_1}{b_1}  \right)}^{I_N\left( \frac{a_1 \frac{N}{b_1 i} +a_2}{b_1 \frac{N}{b_1 i} +b_2}\right)}\left|F_N(j)-\frac{j}{|F_N|}\right|
   \leq
 \left|\frac{a_1}{b_1}  -   \frac{I_N\left(\frac{a_1}{b_1}\right)
   }{|F_N|} \right| O(iN)  + O(i\delta_B(i))
  \nonumber\ ,
\end{equation}
which cannot be further developed as no general expression for $I_N(a_1/b_1)$ is known.
\end{itemize}
\end{thm}

\begin{proof}
By virtue of Theorem~\ref{maptheo}
the partial Franel sum under study is written as
\begin{eqnarray}
 P\left(\frac{a_1}{b_1} , \frac{a_1 \frac{N}{b_1 i} +a_2}{b_1 \frac{N}{b_1 i} +b_2}  \right) = \sum_{i'=1}^{i-1}\sum_{q=\frac{N}{b_1(i'+1)}+1}^{\frac{N}{b_1 i'}}\sum_{n=2}^{|F'_{i'}|}
  \left|\frac{k(a_1 q+a_2)-a_1 h}{k(b_1 q+b_2)-b_1 h }  -  \frac{I_N\left( \frac{k(a_1 q+a_2)-a_1 h}{k(b_1 q+b_2)-b_1 h } \right) }{|F_N|}\right| \nonumber 
  \end{eqnarray}
  where the sum over $n$ runs over the elements $h/k$  in $F'_{i'}$, approximately $n=I_{i'}(h/k) + O(i')$.
 By virtue of Theorem~\ref{maptheo} and Corolary~\ref{theoIeta}
\begin{eqnarray}
 && I_N\left( \frac{k(a_1 q+a_2)-a_1 h}{k(b_1 q+b_2)-b_1 h } \right) =  I_N\left( \frac{a_1 q+a_2}{b_1 q+b_2}\right)
  +sI_{i'}\left(\frac{h}{k} \right) + O(i') \nonumber\\
 &&= I_N\left(\frac{a_1}{b_1}\right)  +s\frac{3}{\pi^2}\frac{N^2}{b_1^2q}  -    s\frac{3}{\pi^2}q\left\{\frac{N}{b_1q}\right\}^2 + s\frac{3}{\pi^2}\lf\frac{N}{b_1q}\rf^2\frac{h}{k} + O(N\delta_A(i'))
\nonumber
\end{eqnarray}
where we have used  $i'=\lf\frac{N}{qb_1}\rf$
.  Furthermore
\begin{eqnarray}
\frac{  I_N\left( \frac{k(a_1 q+a_2)-a_1 h}{k(b_1 q+b_2)-b_1 h } \right)}{|F_N|} &=& \frac{I_N\left(\frac{a_1}{b_1}\right)}{|F_N|}  +\frac{s}{b_1^2q}  -    \frac{sq}{N^2}\left\{\frac{N}{b_1q}\right\}^2 + \frac{s}{N^2}\lf\frac{N}{b_1q}\rf^2\frac{h}{k} + O\left(\frac{\delta_A(i')}{N}\right)
\nonumber
\end{eqnarray}
   
The Farey element inside the partial Franel sum is approximated as 
\begin{eqnarray}
\frac{k(a_1 q+a_2)-a_1 h}{k(b_1 q+b_2)-b_1 h } &=&
\frac{k(a_1 q+a_2)-a_1 h}{k(b_1 q+b_2)-b_1 h } -\frac{a_1}{b_1}+ \frac{a_1}{b_1} \nonumber \\ 
 &=& \frac{s}{b_1^2 q}\frac{1}{ 1+\frac{b_2}{qb_1}- \frac{h}{qk} } + \frac{a_1}{b_1} \nonumber\\
 &=& \frac{s}{b_1^2 q}\left(1 -\frac{b_2}{qb_1}+ \frac{h}{qk} \right)  + \frac{a_1}{b_1} + O(1/q^3) \nonumber
\ ,
\end{eqnarray}
where we have used $(b_1 a_2 -a_1 b_2) =s$. The partial Franel sum under study becomes
  \begin{eqnarray}
& &\sum_{i'=1}^{i-1}\sum_{q=\frac{N}{b_1(i'+1)}+1}^{\frac{N}{b_1 i'}} |F'_{i'}| 
  \left|\frac{a_1}{b_1}  -   \frac{I_N\left(\frac{a_1}{b_1}\right)
   }{|F_N|} - \frac{sb_2}{b_1^3q^2}  + 
   \frac{sq}{N^2}\left\{\frac{N}{b_1q}\right\}^2 +  
   O\left(\frac{\delta_A(i')}{N}\right)\right|\ , \nonumber
   \end{eqnarray}
   where the terms proportional to $1/q$ and $h/k$ have canceled out leaving a negligible residue. The sum over $n$ has been evaluated just by multiplying by $|F'_{i'}|$ as the dependency on $h/k$ disappeared.
   Evaluating the asymptotes of the sums of the individual terms within the   absolute value gives:
   
   \begin{eqnarray}
   \sum_{i'=1}^{i-1}\sum_{q=\frac{N}{b_1(i'+1)}+1}^{\frac{N}{b_1 i'}} |F'_{i'}| &=&
   O\left(iN\right)\ ,
   \nonumber\\
   \sum_{i'=1}^{i-1}\sum_{q=\frac{N}{b_1(i'+1)}+1}^{\frac{N}{b_1 i'}} \frac{|F'_{i'}|}{q^2} &=&
   O\left(\frac{i^4}{N}\right)\ ,
   \nonumber\\
   \sum_{i'=1}^{i-1}\sum_{q=\frac{N}{b_1(i'+1)}+1}^{\frac{N}{b_1 i'}} |F'_{i'}|\frac{q}{N^2} &=& O(\log i)\ ,  \nonumber\\
   \sum_{i'=1}^{i-1}\sum_{q=\frac{N}{b_1(i'+1)}+1}^{\frac{N}{b_1 i'}} |F'_{i'}|\frac{\delta_A(i')}{N} &=&  O\left(i\delta_B(i)\right) \nonumber\ ,
   \end{eqnarray}
with $0<B<A$. Keeping the two dominant terms gives
   \begin{eqnarray}
 P\left(\frac{a_1}{b_1} , \frac{a_1 \frac{N}{b_1 i} +a_2}{b_1 \frac{N}{b_1 i} +b_2}  \right) \leq
 \left|\frac{a_1}{b_1}  -   \frac{I_N\left(\frac{a_1}{b_1}\right)
   }{|F_N|} \right| O(iN)  + O(i\delta_B(i))\nonumber \ ,
    \end{eqnarray}
which is the searched result for $b_1>2$. For $1\leq b_1\leq 2$
\begin{equation}
  \frac{a_1}{b_1}  -   \frac{I_N\left(\frac{a_1}{b_1}\right)}{|F_N|} =0\nonumber
\end{equation}
and the theorem is demonstrated.
\end{proof}


\end{document}